\newtheorem{theorem}{Theorem}[section]
\newtheorem{lemma}{Lemma}[section]
\newtheorem{definition}{Definition}[section]
\newtheorem{remark}{Remark}[section]
\begin{document}

\begin{center}
{\Large Well-posedness of   first order  semilinear  PDEs by stochastic
 perturbation }

\end{center}

\vspace{0.3cm}

\begin{center}
{\large  Christian Olivera  \footnote{Research partially supported by  FAPESP 2012/18739-0.}} \\

\textit{Departamento de
 Matem\'{a}tica, Universidade Estadual de Campinas, \\ F. 54(19) 3521-5921 ? Fax 54(19)
3521-6094\\ 13.081-970 -  Campinas - SP, Brazil. ; colivera@ime.unicamp.br}
\end{center}
\vspace{0.3cm}

\vspace{0.3cm}
\begin{center}
\begin{abstract}
We show that  first order  semilinear PDEs  by stochastic
 perturbation are  well-posedness for  globally Holder
 continuous and bounded vector field, with an integrability condition on the
 divergence. This result  extends  the liner case presented in
 \cite{FGP2}. The proof is based on in the     stochastic characteristics method
 and a  version of the  commuting Lemma.
 \end{abstract}
\end{center}

\noindent {\bf Key words:}
 Stochastic characteristic method, First order stochastic partial differential equations,  Stochastic perturbation, commuting Lemma.

\vspace{0.3cm} \noindent {\bf MSC2000 subject classification:} 60H10
, 60H15.

\section {Introduction}

\noindent This work is  motivated by the paper \cite{FGP2} where the linear equation

\begin{equation}\label{trasport}
 \left \{
\begin{array}{lll}
    du(t, x) + b(t, x) \nabla u(t, x)  dt + \nabla u(t,x) \circ d B_{t}=0,\\
 u(0, x) = f(x)\in  L^\infty(\mathbb{R}^{d}),
\end{array}
\right .
\end{equation}

\noindent has been studied, was  proved existence and uniqueness of $L^{\infty}$- solutions  for  a  globally Holder
 continuous and bounded vector field, with an integrability condition on the
 divergence,  and  where    $B_{t} = (B_{t}^{1},...,B _{t}^{d} )$ is a
standard Brownian motion in $\mathbb{R}^{d}$ .

 \noindent  The aim of this paper is to investigate parts of this theory under the effect of nonlinear terms. Namely , we considerer the semilinear SPDE

 \begin{equation}\label{trasportS}
 \left \{
\begin{array}{lll}
    du(t, x) + b(t, x) \nabla u(t, x) \ dt +  F(t,x,u) \ dt +  \nabla u(t,x) \circ d B_{t}=0,\\
 u(0, x) = f(x)\in  L^\infty(\mathbb{R}^{d}).
\end{array}
\right .
\end{equation}

\noindent We shall  prove the
existence and  uniqueness  of weak $L^{\infty}$- solutions  for  a  globally Holder
 continuous and bounded vector field, with an integrability condition on the
 divergence. Moreover , we obtain a representation of the solution via stochastic flows.  This is a example of nonlinear SPDE where the stochastic perturbation  makes the equation  well-posedness.

\noindent The fundamental tools used here is  the stochastic characteristics method (see for example \cite{Chow},  \cite{Ku3} and \cite{Ku} ) and the  version of the commuting Lemma presented    in \cite{FGP2}. That is, we follows
 the  strategy given in \cite{FGP2} in  combination with the stochastic characteristics method.

\noindent The article is organized as follows: Section 2 we shall  define the concept of  weak  $L^{\infty}-$solutions for the equation (\ref{trasportS}) and we shall prove existence for this class of solutions. 
. In section 3, we shall show a uniqueness theorem for   weak  $L^{\infty}-$solutions.

Through of this paper we fix a stochastic basis with a
$d$-dimensional Brownian motion $(\Omega, \mathcal{F}, \{
\mathcal{F}_t: t \in [0,T] \}, \mathbb{P}, (B_{t}))$.

\section{ Existence  of  weak  $L^{\infty}-$solutions}

\noindent Let $T >0$  be fixed. For $ \alpha \in (0, 1)$ define the space
 $ L^{\infty}([0,T], C^{\alpha}(\mathbb{R}^{d})) $  as
the set of all bounded Borel functions $f : [0,T ] \times  \mathbb{R}^{d}  \rightarrow \mathbb{R}  $ for which

\[
[f]_{\alpha,T}=\sup_{t\in [0,T]} \sup_{x\neq y\in \mathbb{R}^{d}} \frac{|f(x)-f(y)|}{|x-y|}
\]

\noindent We write the  $ L^{\infty}([0,T], C^{\alpha}(\mathbb{R}^{d},\mathbb{R}^{d} )) $
for the space of all  vector  fields having components in  $ L^{\infty}([0,T], C^{\alpha}(\mathbb{R}^{d})) $.

\noindent We shall  assume that

\begin{equation}\label{con1}
b\in  L^{\infty}([0,T], C^{\alpha}(\mathbb{R}^{d},\mathbb{R}^{d} )),
\end{equation}

\begin{equation}\label{con2}
Div \ b \in L^{p}([0,T] \times \mathbb{R}^{d}) \ for \ p>2.
\end{equation}

\begin{equation}\label{con3}
F\in    L^{1}( [0, T],  L^{\infty}( \mathbb{R}^{d} \times \mathbb{R}))
\end{equation}

\noindent and

\begin{equation}\label{con4}
F \in  L^{\infty}( [0, T] \times \mathbb{R}^{d},  LIP(  \mathbb{R})).
\end{equation}

\subsection{ Definition of  weak  $L^{\infty}-$solutions}
\begin{definition}\label{defisolu} We assume (\ref{con1}), (\ref{con2}), (\ref{con3}) and  (\ref{con4}).
 A   weak  $L^{\infty}-$solution of the  Cauchy problem (\ref{trasportS}) is a stochastic process
    $u\in L�^{\infty}( �\Omega\times[0, T] \times \mathbb{R}^{d})$
such that, for every test function�
  $\varphi \in C_{0}^{\infty}(\mathbb{R}^{d})$, the process $\int u(t,
  x)\varphi(x)
  dx$ has a continuous modification which is a
$\mathcal{F}_{t}$-semimartingale and satisfies

\[
\int u(t,x) \varphi(x) dx= \int f(x) \varphi(x) \ dx
\]
\[
+\int_{0}^{t} \int b(s,x) \nabla \varphi(x) u(s,x) \ dx ds
+ \int_{0}^{t} \int div \ b(s,x) \varphi(x) u(s,x) \ dx ds \
\]
\[
+ \int_{0}^{t} \int F(s,x,u) \varphi(x) \ dx ds \
+    \sum_{i=0}^{d} \int_{0}^{t} \int  D_{i}\varphi(x) u(s,x) \ dx \circ
dB_{s}^{i}
\]
\end{definition}

\begin{remark} We observe that a   weak  $L^{\infty}-$solution in the previous Stratonovich sense  satisfies
the It\^o equation

\[
\int u(t,x) \varphi(x) dx= \int  f(x) \varphi(x) \ dx
\]
\[
+\int_{0}^{t} \int b(s,x) \nabla \varphi(x) u(s,x) \ dx ds
+ \int_{0}^{t} \int div \ b(s,x) \varphi(x) u(s,x) \ dx ds \
\]
\begin{equation}\label{trasportITO}
+ \int_{0}^{t} \int F(s,x,u) \varphi(x) \ dx ds \
+    \sum_{i=0}^{d} \int_{0}^{t} \int  D_{i}\varphi(x) u(s,x) \ dx
dB_{s}^{i} + \frac{1}{2} \int_{0}^{t} u(s,x)  \triangle \varphi(x) \ dx ds
\end{equation}

\noindent for every test function   $\varphi \in C_{0}^{\infty}(\mathbb{R}^{d})$. The converse is also true.
\end{remark}

\subsection{ Existence  of  weak  $L^{\infty}-$solutions}

\begin{lemma}\label{lemaexis} Let $  f\in
L^{\infty}(\mathbb{R}^{d})$. We assume (\ref{con1}), (\ref{con2}), (\ref{con3}) and  (\ref{con4}). Then there exits a weak $L^{\infty}-$solution $u$
of the SPDE (\ref{trasportS}).
\end{lemma}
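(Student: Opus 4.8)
The plan is to build the candidate solution by the stochastic characteristics method and then verify, via Itô's formula, that it solves the weak formulation of Definition 2.1. First I would introduce the stochastic flow $X_{s,t}(x)$ associated to the SDE $dX_{s,t}(x) = b(t,X_{s,t}(x))\,dt + dB_t$, $X_{s,s}(x)=x$; since $b$ is bounded and globally $\alpha$-Hölder, classical results (see \cite{Ku}, \cite{Ku3}) give existence, uniqueness and a flow of diffeomorphisms $\phi_t := X_{0,t}$ whose inverse $\psi_t := \phi_t^{-1}$ is also a diffeomorphism; the $L^p$ bound \eqref{con2} on $\mathrm{Div}\,b$ controls the Jacobian $J_t(x) = \det D\psi_t(x)$ through the formula $J_t = \exp\!\bigl(-\int_0^t \mathrm{div}\,b(s,\psi_s)\,ds\bigr)$ composed appropriately, exactly as in \cite{FGP2}. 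Along these characteristics the SPDE \eqref{trasportS} formally becomes the ODE $\tfrac{d}{dt} u(t,\phi_t(x)) = -F(t,\phi_t(x),u(t,\phi_t(x)))$ (the transport and Stratonovich terms are absorbed by differentiating along the flow), so I would define $u$ implicitly by solving, for each fixed $x$ and $\omega$, the scalar ODE
\[
v(t,x) = f(x) - \int_0^t F\bigl(s,\phi_s(x),v(s,x)\bigr)\,ds, \qquad u(t,y) := v(t,\psi_t(y)).
\]
Because $F(t,x,\cdot)$ is Lipschitz uniformly in $(t,x)$ by \eqref{con4} and $F(t,\cdot,\cdot)\in L^1_t L^\infty_{x,u}$ by \eqref{con3}, Gronwall gives a unique global solution $v$ with $\|v(t,\cdot)\|_\infty \le (\|f\|_\infty + \|F\|_{L^1_tL^\infty})e^{\|F\|_{\mathrm{Lip}}t}$, hence $u\in L^\infty(\Omega\times[0,T]\times\mathbb{R}^d)$, and measurability/adaptedness is inherited from the flow.

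Next I would verify the weak formulation. Fix $\varphi\in C_0^\infty(\mathbb{R}^d)$ and write $\int u(t,y)\varphi(y)\,dy = \int v(t,x)\varphi(\phi_t(x))\,J_t^{-1}\cdot\ldots$ — more cleanly, change variables $y=\phi_t(x)$ so that $\int u(t,y)\varphi(y)\,dy = \int v(t,x)\,\varphi(\phi_t(x))\,|\det D\phi_t(x)|\,dx$. Then I would apply the Itô–Stratonovich formula (Kunita's formula for composition with a stochastic flow, \cite{Ku}, \cite{Chow}) to the product $v(t,x)\,\varphi(\phi_t(x))\,|\det D\phi_t(x)|$: the time derivative of $v$ produces the $F$-term, the spatial derivatives of $\varphi\circ\phi_t$ produce both the $b\cdot\nabla\varphi$ drift and the $\nabla\varphi\circ dB$ Stratonovich term after undoing the change of variables, and the evolution of the Jacobian produces exactly the $\mathrm{div}\,b\,\varphi$ term. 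Collecting everything and changing variables back to $y$ yields the identity in Definition 2.1; the continuous semimartingale property of $t\mapsto\int u(t,y)\varphi(y)\,dy$ is then automatic from this representation.

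The main obstacle is the low regularity of $b$: it is only $C^\alpha$, so $\phi_t$ is not $C^1$ in general and the naive manipulations (differentiating $\varphi\circ\phi_t$, the Jacobian identity, the chain rule along characteristics) are not literally justified. I expect to handle this exactly as in \cite{FGP2}: regularize $b$ by mollification $b^\varepsilon = b*\rho_\varepsilon$, run the whole construction for the smooth problem to get $u^\varepsilon$ satisfying the weak formulation with $b^\varepsilon$, obtain uniform-in-$\varepsilon$ bounds ($\|u^\varepsilon\|_\infty$ from the Gronwall estimate above, which is $\varepsilon$-independent, plus the flow estimates of \cite{FGP2} controlled through $\|\mathrm{Div}\,b^\varepsilon\|_{L^p}\le\|\mathrm{Div}\,b\|_{L^p}$), and then pass to the limit. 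The key convergence input is the stability of the stochastic flows $\phi_t^\varepsilon\to\phi_t$ (and $\psi_t^\varepsilon\to\psi_t$, $J_t^\varepsilon\to J_t$) in suitable $L^p(\Omega\times\mathbb{R}^d)$ spaces, which is where the Hölder hypothesis \eqref{con1} and the divergence condition \eqref{con2} enter — this is precisely the estimate that \cite{FGP2} establishes, and the extra nonlinear term survives the limit because $F$ is continuous in $u$ with $v^\varepsilon\to v$ following from continuity of $\phi^\varepsilon$ and the Lipschitz dependence of the ODE on its coefficients. Each limit in the weak formulation is then a product of a bounded sequence with a convergent one (in the appropriate topology), so the identity passes to the limit and $u$ is the desired weak $L^\infty$-solution. \eop
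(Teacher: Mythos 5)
Your plan follows essentially the same route as the paper: represent the solution by stochastic characteristics as $u(t,x)=Z_t(x,f(X_t^{-1}(x)))$, mollify $b$, and pass to the limit using the flow-stability estimates of Theorem 5 of \cite{FGP2} together with the Lipschitz property of $F$ and a Gronwall bound. The only organizational difference is that the paper also mollifies $f$ and $F$ in two further steps (so as to invoke the classical solvability result of \cite{Chow} for smooth data), whereas you propose to verify the weak formulation for the regularized problem by a direct It\^o--Kunita computation; both variants are standard and lead to the same limiting argument.
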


\begin{proof} {\large Step 1} Assume that $F\in L^{1}( [0, T],  C_{b}^{\infty}( \mathbb{R}^{d} \times \mathbb{R}))$ and $f\in C_{b}^{\infty}( \mathbb{R}^{d})$. We  take a mollifier regularization   $b_{n}$  of  $b$ . It is known (see \cite{Chow}, chapter 1 ) that there exist an  unique classical solution $u_{n}(t,x)$ of the SPDE (\ref{trasportS}), that written in weak It\^o form is
(\ref{trasportITO}) with  $b_{n}$ in place  of $b$. Moreover,

\[
u_{n}(t,x)=Z_{t}^{n}(x, f(Y_{t}^{n}))
\]

\noindent where $Y_{t}^{n}$ is the inverse of $X_{t}^{n}$,  $X_{t}^{n}(x)$ and $Z_{t}^{n}(x,r)$
satisfy the following equations 

\begin{equation}\label{itoassp}
X_{t}^{n}= x + \int_{0}^{t}   b_{n}(s,X_{s}^{n}) \ ds + B_{t},
\end{equation}

\noindent and

\begin{equation}\label{itoass2p}
Z_{t}^{n}= r + \int_{0}^{t}   F(s,X_{s}^{n}(x), Z_{s}^{n} ) \ ds .
\end{equation}

\noindent  According to theorem 5  of  \cite{FGP2}, see too remark 8,   we have that

\[
\lim_{n\rightarrow \infty }\mathbb{E}[ \int_{K} \sup_{t\in[0,T]}| X_{t}^{n}- X_{t}|\ dx ]=0
\]

\noindent and
\[
  \lim_{n\rightarrow \infty }\mathbb{E}[ \int_{K} \sup_{t\in[0,T]}| DX_{t}^{n}- DX_{t}|\ dx ]=0
\]

\noindent for any compact set $K \subset \mathbb{R}^{d}$,  where   $X_{t}(x)$ verifies

\begin{equation}\label{itoass}
X_{t}= x + \int_{0}^{t}   b(s,X_{s}) \ ds + B_{t}.
\end{equation}

\noindent Now, we denote

\[
u(t,x)=Z_{t}(x, f(Y_{t})),
\]

\[
Y_{t} \  is \ the \ inverse \ of  \  X_{t},
\]
\noindent and

\begin{equation}\label{itoass2}
Z_{t}= r + \int_{0}^{t}   F(s,X_{s}(x), Z_{s}) \ ds .
\end{equation}

\noindent Then ,  we observe that

\[
|u^{n}(t,x)- u(t,x)|\leq    | f(Y_{t})    - f(Y_{t}^{n} )  |  +        \int_{0}^{t} |   F(s,X_s^{n}, Z_{s}^{n}( f(Y_{t}^{n} ) ) - F(s,X_s, Z_{s}( f(Y_{t} ) )  | \ ds 
\]

\[
\leq       | f(Y_{t})    - f(Y_{t}^{n})   |    +     C  \int_{0}^{t} |   Z_{s}^{n}( f(Y_{t}^{n} ) )  - Z_{s} (  f(Y_{t} ) )  | \ ds  .
\]

  From to theorem 5  of  \cite{FGP2}, see too remark 8, and  the Lipchitz property of $F$      we conclude that  $\lim_{n\rightarrow \infty }\mathbb{E}[ \int_{K} \sup_{t\in[0,T]}| u_{n}(t,x)- u(t,x)| ]=0$  and 
$u(t,x)$ is a weak  $L^{\infty}-$solution of the SPDE  (\ref{trasportS}).

{\large Step 2} Assume that $F\in L^{1}( [0, T],  C_{b}^{\infty}( \mathbb{R}^{d} \times \mathbb{R}))$. We  a take a mollifier regularization   $f_{n}$  of  $f$ . By the last step $u_{n}(t,x)=Z_{t}(x, f_{n}(Y_{t}))$ is a  weak  $L^{\infty}-$solution of the SPDE (\ref{trasportS}), that written in weak It\^o form is
(\ref{trasportITO}) with  $f_{n}$ in place  of $f$.

\noindent We have  that  any compact set $K \subset \mathbb{R}^{d}$ and $p\geq 1$

\[
lim_{n\rightarrow\infty} \sup_{[0,T]} \int_{K} | f_{n}(X_{t}^{-1})-f (X_{t}^{-1})|^{p} \ dx  =
\]

\[
lim_{n\rightarrow\infty } \sup_{[0,T]} \int_{X_{t}(K)} | f_{n}(x)-f (x)|^{p} \ JX_{t}(x) \ dx =0
\]

\noindent Then we have that

\[
lim_{n\rightarrow\infty } \sup_{[0,T]} \int_{K} |Z_{t}(x, f_{n}(Y_{t})) -Z_{t}(x, f(Y_{t}))|^{p} \ dx   =0.
\]

\noindent Thus $u(t,x)=Z_{t}(x, f(Y_{t}))$ is  a  weak  $L^{\infty}-$solution of the SPDE (\ref{trasportS})

{\large Step 3} We  take a mollifier regularization   $F_{n}$  of  $F$. By the step 2, we have  that
 $u_{n}(t,x)=Z_{t}^{n}(x, f(Y_{t}))$  is a  weak  $L^{\infty}-$solution of the SPDE (\ref{trasportS}), and hold that  $Z_{t}^{n}(x,r)$ satisfies the equation (\ref{itoass2}) with  $F_{n}$ in place  of $F$.

 \noindent  We observe that

\[
|Z_{t}^{n}(x,r)- Z_{t}(x, r)|\leq   \int_{0}^{t} |   F_n(t,X_s, Z_{s}^{n} ) - F(t,X_s, Z_{s} )  | \ ds 
\]

\[
\leq   \int_{0}^{t} |   F_n(t,X_s, Z_{s}^{n} ) - F_n(t,X_s, Z_{s} )  | \ ds  +  \int_{0}^{t} |   F_n(t,X_s, Z_{s}) - F(t,X_s, Z_{s} )  | \ ds  
\]

\[
\leq  C \int_{0}^{t} |   Z_{s}^{n} - Z_{s}   | \ ds  +  \int_{0}^{t} |   F_n(t,X_s, Z_{s}) - F(t,X_s, Z_{s} )  | \ ds  
\]

\noindent By the  Gronwall Lemma we follow that

\[
lim_{n\rightarrow\infty }|Z_{t}^{n}(x,r)- Z_{t}(x, r)|   =0 \ uniformaly \ in \ t, \ x,\ r.
\]

\noindent Then

\[
lim_{n\rightarrow\infty }|Z_{t}^{n}(x, f(Y_{t})) -Z_{t}(x, f(Y_{t}))    =0  \ uniformaly \ in \ t \ and \ x .
\]

\noindent  Therefore,  we conclude that $u(t,x)=Z_{t}(x, f(Y_{t}))$ is  a  weak  $L^{\infty}-$ solution of the SPDE (\ref{trasportS}).
\end{proof}

\section{ Uniqueness   of  weak  $L^{\infty}-$solutions}

\noindent In this section, we shall present an uniqueness theorem
for the SPDE (\ref{trasportS}) under similar  conditions to the
linear  case , see theorem  20 of   \cite{FGP2}.

\noindent Let  $\varphi_{n}$ be  a standard mollifier. We introduced the commutator defined as

\[
    \mathcal{R}_{n}(b,u)=(b\nabla ) (\varphi_{n}\ast u )- \varphi_{n}\ast((b\nabla)u)
  \]

\noindent  We recall here the following  version  of the commutator lemma
which is at the base of our uniqueness theorem.

\begin{lemma}\label{conmuting} Let $\phi_{t}$ be an $C^1$ -diffeomorphism of $\mathbb{R}^{d}$. Assume
$b \in  L_{loc}^{\infty}( \mathbb{R}^{d}, \mathbb{R}^{d})$ , $div b \in  L_{loc}^{1}(\mathbb{R}^{d})$,
$u \in  L_{loc}^{\infty}(\mathbb{R}^{d})$. Moreover, for $d>1$, assume also $J\phi^{-1}\in W_{loc}^{1,1}(\mathbb{R}^{d})$   Then for any $\rho \in C_{0}^{\infty}(\mathbb{R}^{d})$ there exits
a constant  $C_{\rho}$ such that , given any $R>0$ such that $supp(\rho\circ\phi^{-1})\subset B(R)$, we have :

\begin{enumerate}
\item[a)] for $d>1$
 \[
 |\int   \mathcal{R}_{n}(b,u)(\phi(x)) \rho(x)  \ dx |
  \]
 \[
  \leq C_{\rho} \|u\|_{ L_{R+1}^{\infty}} \ [\|div b\|_{ L_{R+1}^{1}} \|J\phi^{-1}\|_{ L_{R}^{\infty}} + \|b\|_{ L_{R+1}^{\infty}}(
   \|D\phi^{-1}\|_{ L_{R}^{\infty}}+ \|DJ\phi^{-1}\|_{ L_{R}^{1}} )]
  \]

\item[b)] for $d=1$

 \[
 |\int   \mathcal{R}_{n}(b,u)(\phi(x)) \rho(x)  \ dx | \leq C_{\rho} \|u\|_{ L_{R+2}^{\infty}} \| b\|_{ W_{R+2}^{1,1}} \|J\phi^{-1}\|_{ L_{R}^{\infty}}
  \]

\end{enumerate}

\end{lemma}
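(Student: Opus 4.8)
The statement is a localized version of the classical DiPerna–Lions commutator estimate, but with the vector field evaluated along a diffeomorphism $\phi$. My plan is to reduce it, by a change of variables, to the standard commutator lemma already available in the literature, and then track carefully how the Jacobian $J\phi^{-1}$, the derivative $D\phi^{-1}$ and (for $d>1$) the weak derivative $DJ\phi^{-1}$ enter the bounds.

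First I would write out the commutator explicitly with the mollifier: for a function $g\in L^\infty_{loc}$ one has
\[
\mathcal{R}_n(b,g)(y)=\int \frac{g(z)-g(y)}{\varepsilon_n}\,b(y)\cdot(\nabla\varphi)_{\varepsilon_n}(y-z)\,dz+\big(g\,\mathrm{div}\,b\big)\ast\varphi_{\varepsilon_n}(y)-g(y)\,\mathrm{div}\,b(y),
\]
so $\mathcal{R}_n(b,g)$ is controlled (in the sense of testing against $C_0^\infty$ functions, as $n\to\infty$) by $\|g\|_{L^\infty}\big(\|Db\|+\|\mathrm{div}\,b\|\big)$ on a neighbourhood of the support; this is exactly the content of the commutator lemma in \cite{FGP2} (and originally DiPerna–Lions / Ambrosio) applied on a ball. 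The new feature here is that we do not want $u$ itself but $u\circ\phi$, i.e. we must estimate $\int \mathcal{R}_n(b,u)(\phi(x))\,\rho(x)\,dx$. I would perform the substitution $y=\phi(x)$, so that $dx = J\phi^{-1}(y)\,dy$ and $\rho(x)=\rho(\phi^{-1}(y))=:\tilde\rho(y)$, turning the integral into $\int \mathcal{R}_n(b,u)(y)\,\tilde\rho(y)\,J\phi^{-1}(y)\,dy$. Now the test function against which $\mathcal{R}_n(b,u)$ is integrated is $\tilde\rho\,J\phi^{-1}$, which is supported in $B(R)$ by hypothesis; its $L^\infty$ and (for $d>1$) $W^{1,1}$ norms are exactly what produce the factors $\|J\phi^{-1}\|_{L^\infty_R}$ and $\|DJ\phi^{-1}\|_{L^1_R}$, while the $W^{1,1}$-regularity assumption $J\phi^{-1}\in W^{1,1}_{loc}$ is precisely what is needed for this to be an admissible test object when $d>1$. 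The factor $\|D\phi^{-1}\|_{L^\infty_R}$ appears when one integrates by parts the term $\int b(y)\cdot\nabla_y[\,\cdot\,]$ after unwinding, because $\nabla_y(\tilde\rho)=(D\phi^{-1})^{T}(\nabla\rho)\circ\phi^{-1}$.

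Concretely, the order of steps would be: (i) fix $\rho$, fix $R$ with $\mathrm{supp}(\rho\circ\phi^{-1})\subset B(R)$, and set $\psi:=\tilde\rho\,J\phi^{-1}$, noting $\mathrm{supp}\,\psi\subset B(R)$; (ii) apply the known commutator estimate from \cite{FGP2} to $\int\mathcal{R}_n(b,u)(y)\,\psi(y)\,dy$, which for $d>1$ gives a bound of the form $C\,\|u\|_{L^\infty_{R+1}}\big(\|\mathrm{div}\,b\|_{L^1_{R+1}}\|\psi\|_{L^\infty}+\|b\|_{L^\infty_{R+1}}\|\psi\|_{W^{1,1}}\big)$, and for $d=1$ the simpler bound $C\,\|u\|_{L^\infty_{R+2}}\|b\|_{W^{1,1}_{R+2}}\|\psi\|_{L^\infty}$ (the enlargement $R+1$ vs.\ $R+2$ reflecting the size of the mollifier support in each case); (iii) estimate $\|\psi\|_{L^\infty}\le\|\tilde\rho\|_{L^\infty}\|J\phi^{-1}\|_{L^\infty_R}$ and $\|\psi\|_{W^{1,1}}\le C_\rho\big(\|J\phi^{-1}\|_{L^\infty_R}\|D\phi^{-1}\|_{L^\infty_R}+\|DJ\phi^{-1}\|_{L^1_R}\big)$ by the product rule, absorbing all $\rho$-dependent constants into $C_\rho$; (iv) collect terms to obtain exactly the two displayed inequalities.

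The main obstacle — really the only nontrivial point — is step (iii) together with justifying that $\psi=\tilde\rho\,J\phi^{-1}$ is a legitimate test function for the commutator lemma: when $d>1$ the standard lemma requires the test function to lie in $W^{1,1}$ (not just $C_0^\infty$), and one must check that $\tilde\rho\, J\phi^{-1}\in W^{1,1}$, which follows from $\phi\in C^1$ (giving $\tilde\rho\in C^1_0$ and $D\phi^{-1}\in L^\infty_{loc}$) and the extra hypothesis $J\phi^{-1}\in W^{1,1}_{loc}$, and then apply the product rule $D\psi=(D\tilde\rho)J\phi^{-1}+\tilde\rho\,DJ\phi^{-1}$ in the weak sense. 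For $d=1$ this subtlety disappears because $J\phi^{-1}=(\phi^{-1})'$ and the one-dimensional commutator estimate only costs an $L^\infty$ weight, which is why no $DJ\phi^{-1}$ term is needed there. Everything else is bookkeeping of localization radii and constants.
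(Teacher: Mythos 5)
The paper offers no proof of this lemma at all — it simply points to p.~28 of \cite{FGP2} — and your plan reconstructs essentially the argument given there: change variables $y=\phi(x)$, regard $\psi=(\rho\circ\phi^{-1})\,J\phi^{-1}$ as the test function (which is exactly where the hypothesis $J\phi^{-1}\in W^{1,1}_{loc}$ and the factors $\|D\phi^{-1}\|_{L^{\infty}_{R}}$, $\|DJ\phi^{-1}\|_{L^{1}_{R}}$ enter via the product rule), and run the standard commutator integration by parts. One minor caveat that does not affect the plan: your displayed ``explicit'' formula for $\mathcal{R}_n(b,g)$ puts the difference quotient on $g$ rather than on $b$ and is not the identity actually needed — since $b$ is only $L^{\infty}$ with $\mathrm{div}\, b\in L^{1}$ for $d>1$, the derivative must be thrown onto $\psi$ rather than onto $b$, which is precisely what your steps (ii)--(iv) do.
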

\begin{proof}
See pp 28 of  \cite{FGP2}.
\end{proof}

\noindent We are ready to prove our  uniqueness result of weak
$L^{\infty}-$solution to the  Cauchy problem (\ref{trasportS}).

\begin{theorem}\label{uni} Assume (\ref{con1}), (\ref{con2}), (\ref{con3}) and  (\ref{con4}). Then, for every   $f\in L^{\infty}(\mathbb{R}^{d})$ there exists an unique  weak $L^{\infty}-$solution of the Cauchy problem
(\ref{trasportS}).
\end{theorem}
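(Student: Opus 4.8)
The plan is to establish uniqueness by reducing to a single nonlinear ODE along characteristics, exploiting the representation formula from Lemma \ref{lemaexis} together with the commutator estimate of Lemma \ref{conmuting}. Since the equation is only semilinear (the nonlinearity $F$ does not multiply $\nabla u$), the natural strategy is: given two weak $L^{\infty}$-solutions $u_1,u_2$ with the same initial datum $f$, pull them back along the stochastic flow $X_t$ generated by \eqref{itoass}, show that the pulled-back processes $v_i(t,x) := u_i(t,X_t(x))$ satisfy (in a weak/distributional sense in $x$, pathwise in $\omega$) the transported equation $\partial_t v_i(t,x) = -F(t,X_t(x),v_i(t,x))$, and then conclude by Gronwall that $v_1 \equiv v_2$, hence $u_1 \equiv u_2$. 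The key technical input is that the Stratonovich noise term and the $b\cdot\nabla$ term cancel against the Itô-correction and the Jacobian factors when one tests against $\varphi(X_t^{-1}(x)) J X_t^{-1}(x)$ and applies the Itô-Wentzell / Itô-Kunita formula; this is exactly where the commutator $\mathcal{R}_n(b,u)$ enters and must be shown to vanish via Lemma \ref{conmuting}.

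First I would fix $f \in L^\infty(\mathbb{R}^d)$ and two weak $L^\infty$-solutions $u_1, u_2$; by linearity of the equation in the leading-order terms it is not quite enough to look at $w = u_1 - u_2$ (because $F$ is nonlinear), so instead I would work directly with each $u_i$ in Itô form \eqref{trasportITO}. I would mollify: set $u_i^n = \varphi_n * u_i$ (convolution in $x$), so that $u_i^n$ solves the mollified Itô SPDE with an extra commutator term $\mathcal{R}_n(b,u_i)$ and a mollified nonlinearity $\varphi_n * F(\cdot,\cdot,u_i)$. Then I would apply the Itô-Kunita-Wentzell formula to the composition $t \mapsto \int u_i^n(t,x)\,\psi(X_t^{-1}(x))\, JX_t^{-1}(x)\,dx$ for test functions $\psi \in C_0^\infty$; the transport term $b\cdot\nabla$, the divergence term $\mathrm{div}\, b$, the Stratonovich/Itô noise terms and the $\frac12\triangle$ correction all combine (using that $X_t$ solves \eqref{itoass} and the standard change-of-variables for Jacobians) to leave only the contribution of $F$ and the error term $\int \mathcal{R}_n(b,u_i)(x)\,\psi(X_t^{-1}(x))\,JX_t^{-1}(x)\,dx$. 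Here one needs the regularity $JX_t^{-1} \in W^{1,1}_{loc}$ of the flow, which follows from condition \eqref{con2} and Theorem 5 / Remark 8 of \cite{FGP2} as already invoked in the proof of Lemma \ref{lemaexis}.

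Next I would pass to the limit $n \to \infty$. By Lemma \ref{conmuting}, applied with $\phi = X_t^{-1}$ (pathwise), the commutator term is bounded by $C_\rho \|u_i\|_{L^\infty}$ times norms of $b$, $\mathrm{div}\,b$ and of the flow Jacobian, and — crucially — $\mathcal{R}_n(b,u_i)(\phi(x)) \to 0$ in the appropriate weak sense as $n\to\infty$ for a.e. $\omega$ (this is the content of the commutator lemma: the bound is uniform in $n$ and the limit is zero by the standard DiPerna–Lions argument adapted in \cite{FGP2}); the mollified nonlinearity $\varphi_n * F(\cdot,\cdot,u_i) \to F(\cdot,\cdot,u_i)$ in $L^1_{loc}$ using \eqref{con3}. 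One then obtains, for a.e. $\omega$ and every $\psi \in C_0^\infty$,
\[
\int u_i(t,X_t(x))\,\psi(x)\,dx = \int f(x)\,\psi(x)\,dx - \int_0^t\!\!\int F(s,X_s(x),u_i(s,X_s(x)))\,\psi(x)\,dx\,ds,
\]
i.e. $v_i(t,x) := u_i(t,X_t(x))$ satisfies the ODE $\dot v_i = -F(t,X_t(\cdot),v_i)$ in the distributional-in-$x$ sense, with $v_i(0,\cdot) = f$. Since this equation has no $x$-derivatives, for a.e. fixed $x$ (and a.e. $\omega$) it is a genuine scalar ODE, and by the Lipschitz property \eqref{con4} of $F$ in the last variable it has a unique solution; Gronwall gives $v_1(t,x) = v_2(t,x)$ for a.e. $(t,x)$, a.e. $\omega$. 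Composing with $X_t$ (a diffeomorphism) yields $u_1 = u_2$ in $L^\infty(\Omega \times [0,T] \times \mathbb{R}^d)$. Existence being Lemma \ref{lemaexis}, this completes the proof.

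The main obstacle I anticipate is the rigorous passage to the limit in the commutator term: one must justify applying Lemma \ref{conmuting} with the random, time-dependent diffeomorphism $\phi = X_t^{-1}$ uniformly in $t$ and with enough integrability in $\omega$ to interchange limits and expectations, and one must verify that the weak-in-$x$ formulation pulled back along the flow is legitimate despite $u_i$ being merely $L^\infty$ (no spatial regularity). A secondary subtlety is handling the Stratonovich-to-Itô conversion consistently through the change of variables — making sure the $\frac12\triangle$ term and the quadratic covariation between the noise in the SPDE and the noise driving $X_t$ cancel exactly — but this is the same computation as in the linear case \cite{FGP2} and should go through unchanged since $F$ contributes only a bounded-variation (in $t$) term that does not interact with the martingale part.
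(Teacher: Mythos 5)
Your proposal follows essentially the same route as the paper: compose the mollified solution(s) with the stochastic flow $X_t$ via the It\^o--Ventzel--Kunita formula, kill the resulting commutator term using Lemma \ref{conmuting} (with $\phi=X_t^{-1}$, pathwise), and conclude by the Lipschitz property of $F$ and Gronwall. The only cosmetic difference is that the paper mollifies the difference $w=u-v$ directly and bounds $F(\cdot,u)-F(\cdot,v)$ by $C|w|$, rather than deriving the pulled-back ODE for each solution separately as you do; both reduce to the same Gronwall estimate along characteristics.
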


\begin{proof}

{\large Step 1}(  It\^o-Ventzel-Kunita
formula) Let $u,v$ be are two   weak $L^{\infty}-$solutions and $\varphi_{n}$ be  a standard mollifier.  We put $w=u-v$, applying the It\^o-Ventzel-Kunita formula (see Theorem 8.3 of \cite{Ku2} ) to $F(y)=\int w(t,z) \varphi_{n}(y-z) \ dz $, we obtain that

\[
\int w(t,z) \varphi_{n}(X_{s}-z) dz
\]

\noindent is equal to

\[
\int_{0}^{t} \int b(s,z) \nabla [\varphi_{n}(X_{s}-z)]  w(s,z) \ dz
ds + \int_{0}^{t} \int div \ b(s,z) \varphi_{n}(X_{s}-z) u(s,z) \
dz ds \ +
\]
\[
\int_{0}^{t} \int (F(s,z,u)-F(s,z,v)) \varphi_{n}(X_{s}-z)\ dz ds \
 + \sum_{i=1}^{d}  \int_{0}^{t} \int
 w(s,z) D_{i} [\varphi_{n}(X_{s}-z)]] dz \circ dB_{s}^{i}   +
\]

\[
\int_{0}^{t} \int (b \nabla)(w(s,.)\ast \varphi_{n})(X_{s}) \
ds -  \sum_{i=1}^{d}  \int_{0}^{t} \int
 w(s,z) D_{i}[ \varphi_{n}(X_{s}-z)] dz \circ dB_{s}^{i}.
\]

\noindent Then

\[
\int w(t,z) \varphi_{n}(X_{t}-z) dz =
\]

\[
\int_{0}^{t} \int (F(s,z,u)-F(s,z,v)) \varphi_{n}(X_{s}-z)\ dz ds \
- \int_{0}^{t}  \mathcal{R}_{n}(w,b)(X_{s}(x))\  ds,
\]

\noindent where $\mathcal{R}_{n}$  is the commutator defined above.

{\large Step 2}( $\lim_{n\rightarrow \infty}\int_{0}^{t} \mathcal{R}_{n}(w,b)(X_{s}) \  ds= 0$) We argue  as in \cite{FGP2}. We observe  by Lemma \ref{conmuting} and the Lebesgue dominated theorem that

\[
\lim_{n\rightarrow \infty}\int_{0}^{t} \int \mathcal{R}_{n}(w,b)(X_{s}) \rho(x) \  ds= 0
\]

\noindent for all $\rho\in C_{0}^{\infty}(\mathbb{R}^{d})$, for details see Theorem 20 of  \cite{FGP2}.

{\large Step 3}( $w=0$) We observe that

\[
\lim_{n\rightarrow \infty}   (w(t,.)\ast \varphi_{n})(.)  =w(t,.)
\]

\noindent,  where the convergence is in  $L^{1}([0,T],
L^{1}_{loc}(\mathbb{R}^{d}))$. From the flow properties of $X_t$,
  see theorem 5  of  \cite{FGP2}, we obtain

\[
\lim_{n\rightarrow \infty}   (w(t,.)\ast \varphi_{n})(X_{t})  =w(t,X_{t})
\]

\noindent and

\[
\lim_{n\rightarrow \infty}   ((F(t,.,u)-F(t,.,v))\ast \varphi_{n})(X_{t})=
\]

\[
(F(t,,X_{t},u(t,,X_{t}))-F(t,,X_{t},v(t,,X_{t})) ,
\]

\noindent where the convergence is $~~\mathbb{P} \ a.s \mbox{ in }\ L^{1}([0,T],
L^{1}_{loc}(\mathbb{R}^{d}))$.  Then by steps 1, 2 we have

\[
 w(t,X_{t}) = \int_{0}^{t} F(s,,X_{s},u(t,,X_{s}))-F(s,,X_{s},v(t,,X_{s})) \ ds.
\]

\noindent Thus, for any compact set $K\subset \mathbb{R}^{d}$ we obtain that

\[
 \int_{K} |w(t,X_{t})| dx \leq  \int_{0}^{t} \int_{K} |F(s,,X_{s},u(t,,X_{s}))-F(s,,X_{s},v(t,,X_{s}))| \ dx ds.
\]

\[
 \ \leq C \int_{0}^{t} \int_{K} |w(t,,X_{s})| \ dx ds.
\]

\noindent where $C$ is contant related to the Lipchitz property of $F$.  It follows

\[
 \int_{K} |w(t,X_{t})| dx  \leq C  \int_{0}^{t} \int_{K} |w(t,,X_{s})| \ dx ds.
\]

\noindent and thus   $w(t,X_{t})=0 $ by the  Gronwall Lemma.

\end{proof}

\begin{remark}
We observe that the unique solution $u(t,x)$ has the
representation $u(t,x)=Z_{t}(x, f(X_{t}^{-1}))$ , where  $X_{t}$ and $Z_{t}$ satisfy the equations
(\ref{itoass}) and (\ref{itoass2}) respectively.
\end{remark}

\begin{remark} We mention  that other variants of the  theorem \ref{uni}  can be proved. In fact, the step 2 is valid under other hypotheses, see corollary 23 of  \cite{FGP2}.
\end{remark}

\begin{remark} We recall that relevant examples of non-uniqueness for the deterministic linear transport equation
are presented in \cite{FGP2} and  \cite{F1}. Currently we do not get   a counter-example itself of the non-linear case. An interesting future work 
is to study if  the nonlinear case may induce new pathologies. 

\end{remark}

\end{document}